\newtheorem{theorem}{Theorem}[section]
\newtheorem{corollary}[theorem]{Corollary}
\newtheorem{example}[theorem]{Example}
\newtheorem{lemma}[theorem]{Lemma}
\newenvironment{proof}[1][Proof]{\noindent\textbf{#1.} }{\ \rule{0.5em}{0.5em}}
\begin{document}

\title{Intermediate algebras in Archimedean semiprime $f$-algebras}
\author{Karim Boulabiar \\
%EndAName
{\small D\'{e}partement de Mathematiques, Facult\'{e} des Sciences de Tunis}%
\\
{\small Universit\'{e} de Tunis El Manar, 2092, El Manar, Tunisia}\\
{\small Mathematisch Instituut, Leiden University, Nederland}\\
\texttt{\small karim.boulabiar@fst.utm.tn}}
\maketitle

\begin{abstract}
We introduce the notion of bounded quasi-inversion closed semiprime $f$%
-algebras and we prove that, if $A\ $is such an algebra, then any
intermediate algebra in $A$ is an order ideal of $A$. This extends a recent
result by Domn\'{\i}guez who has dealt with the unital case (the problem on $%
C\left( X\right) $-type spaces has been solved earlier by Dom\'{\i}nguez, G%
\'{o}mez-P\'{e}rez, and Mulero). To illustrate our main result, we show that
any subalgebra of $L^{0}\left( 
%TCIMACRO{\U{b5}}%
%BeginExpansion
{\mu}%
%EndExpansion
\right) $ with $%
%TCIMACRO{\U{b5}}%
%BeginExpansion
{\mu}%
%EndExpansion
$ $\sigma $-finite (respectively, $C^{\infty }\left( K\right) $ with $K$
Stonean) containing $L^{\infty }\left( 
%TCIMACRO{\U{b5}}%
%BeginExpansion
{\mu}%
%EndExpansion
\right) $ (respectively, $C\left( K\right) $) is automatically a Dedekind
complete $f$-algebra with unit.
\end{abstract}

\bigskip 

\section{Introduction}

Let $X$ be a topological space. The lattice-ordered algebra of all
real-valued continuous functions on $X\ $is denoted by $C\left( X\right) $
and its subalgebra of all bounded functions by $C_{b}\left( X\right) $. In
their interesting paper \cite{DGM97}, Dom\'{\i}nguez, G\'{o}mez-P\'{e}rez,
and Mulero define an intermediate algebra in $C\left( X\right) $ to be any
subalgebra of $C\left( X\right) $ containing $C_{b}\left( X\right) $. Among
other facts, they proved that an intermediate algebra in $C\left( X\right) $
is an order ideal (i.e., a solid vector subspace) of $C\left( X\right) $.
Recently, this remarkable result has been extended by Dom\'{\i}nguez in \cite%
{D24} to the more general setting of Archimedean unital $f$-algebras ($\Phi $%
-algebras in the terminology of Henriksen and Johnson in \cite{HJ61}) as
follows.

Let $A$ be an Archimedean $f$-algebra with a multiplicative unit $e$ and
assume that $A$ is bounded inversion closed, i.e., $a\geq e$ in $A$ implies $%
a$ has an inverse in $A$ (this concept is due again to Henriksen and Johnson
in \cite{HJ61}). The set of all bounded elements in $A$ is denoted by $A_{b}$%
, i.e.,%
\[
A_{b}=\left\{ a\in A:\left\vert a\right\vert \leq 
%TCIMACRO{\U{b5}}%
%BeginExpansion
{\mu}%
%EndExpansion
e\text{ for some }%
%TCIMACRO{\U{b5}}%
%BeginExpansion
{\mu}%
%EndExpansion
\in \left( 0,\infty \right) \right\} 
\]%
(notice that $A_{b}$ is just the principal order ideal of $A\ $generated by $%
e$). Then any intermediate algebra in $A$ (i.e., a subalgebra of $A\ $%
containing $A_{b}$) is an order ideal of $A$ (for more details, see \cite[%
Proposition 4]{D24}).

The main purpose of this paper is to extend Dom\'{\i}nguez's theorem to the
wider class of Archimedean semiprime $f$-algebras then to provide some
applications mainly in concrete situations. In order to get around the lack
of multiplicative units, we thought about using the notion of
quasi-invertible elements (see, e.g., \cite{BD73}), which is valid in the
non unital case. We then introduced a \textquotedblleft
new\textquotedblright\ concept, namely, bounded quasi-inversion closedness,
to replace the notion of bounded inversion closedness. Further details seem
to be in order.

Let $A$ be an Archimedean $f$-algebra and assume that $A\ $is semiprime,
i.e., with no nonzero nilpotent elements. Then $A\ $is said to be bounded
quasi-inversion closed if, for every $a\in A$, the inequality $\left\vert
a\right\vert \leq \left\vert a^{2}-a\right\vert $ implies that $a$ has a
quasi-inverse in $A$ (see \cite{BD73} for quasi-invertibility in an
arbitrary associative real algebra). We first prove that this the two
closedness notions coincide in the unital case. Also, we define the set $%
A_{b}$ of all bounded elements in $A$ by $A_{b}=\left\{ a\in A:a^{2}\leq 
%TCIMACRO{\U{b5}}%
%BeginExpansion
{\mu}%
%EndExpansion
\left\vert a\right\vert \text{ for some }%
%TCIMACRO{\U{b5}}%
%BeginExpansion
{\mu}%
%EndExpansion
\in \left( 0,\infty \right) \right\} $. We prove that $A_{b}$ is an order
ideal and a subalgebra of $A$. We can thus extend the notion of intermediate
algebras in $A$ in an obvious way. We prove that if $A$ is bounded
quasi-inverison closed then any intermediate algebra in $A$ is an order
ideal. We show also that if $A$ is relatively uniformly complete then $A\ $%
bounded quasi-inversion closed and so any intermediate algebra in $A$ is
again an order ideal of $A$. As application, we show that if $L$ is a
Dedekind complete vector lattice with a distinguished weak order unit $e$,
and if its universally complete $L^{u}$ is equipped with its unique
structure of an $f$-algebra with $e$ as multiplicative unit, then any
subalgebra of $L^{u}$ containing the principal order ideal of $L$ generated
by $e$ is a Dedekind complete $f$-algebra with $e$ as multiplicative unit.
This result can be illustrated as follows. Any subalgebra of $L^{0}\left( 
%TCIMACRO{\U{b5}}%
%BeginExpansion
{\mu}%
%EndExpansion
\right) $ with $%
%TCIMACRO{\U{b5}}%
%BeginExpansion
{\mu}%
%EndExpansion
$ $\sigma $-finite (respectively, $C^{\infty }\left( K\right) $ with $K$
Stonean) containing $L^{\infty }\left( 
%TCIMACRO{\U{b5}}%
%BeginExpansion
{\mu}%
%EndExpansion
\right) $ (respectively, $C\left( K\right) $) is automatically a Dedekind
complete $f$-algebra with unit.

We take it for granted that the reader is familiar with the Theory of Vector
Lattices. In this regard, we refer to the classical references \cite%
{AB03,LZ71} for either unexplained terminology and notation or unproved
properties and results.

\section{Bounded quasi-inversion closedness}

The real associative algebra $A$ is called a \emph{lattice-ordered algebra}
if $A$ is simultaneously a vector lattice whose positive cone $A_{+}=\left\{
a\in A:0\leq a\right\} $ is closed under multiplication, i.e.,%
\[
ab\in A_{+}\text{ for all }a,b\in A_{+}.
\]%
The lattice-ordered algebra $A$ is called an $f$\emph{-algebra }if $A$
satisfies the extra condition that%
\[
a\wedge b=0\text{ in }A\text{ implies }ac\wedge b=ca\wedge b=0\text{ for all 
}c\in A_{+}.
\]%
An $f$-algebra $A$ is said to be \emph{semiprime} if $0$ is the only
nilpotent element in $A$. Most of the classical real algebras of functions
are examples of semiprime $f$-algebras. We now record some elementary
properties of $f$-algebras (for proofs, see \cite{HP82,P81,Z83}).

\begin{enumerate}
\item Any Archimedean $f$-algebra is commutative.

\item Multiplications by positive elements in any $f$-algebra $A$ are
lattice homomorphisms. Hence, if $a,b\in A$ with $b\in A_{+}$ then $%
a^{+}b=\left( ab\right) ^{+}$ and $ba^{+}=\left( ba^{+}\right) $.

\item The equality $\left\vert ab\right\vert =\left\vert a\right\vert
\left\vert b\right\vert $ holds in any $f$-algebra.

\item If $a$ is an element in an $f$-algebra then $a^{+}a^{-}=a^{-}a^{+}=0$.

\item Squares in an $f$-algebra $A$ are positive. More precisely, $%
a^{2}=\left\vert a\right\vert ^{2}$ for all $a\in A$.

\item If $A$ is a semiprime $f$-algebra and $a,b\in A$, then $ab=0$ if and
only if $\left\vert a\right\vert \wedge \left\vert b\right\vert =0$.

\item If $A$ is semiprime and $a,b\in A$ then $a^{2}\leq b^{2}$ if and only
if $\left\vert a\right\vert \leq \left\vert b\right\vert $.

\item If an $f$-algebra has a multiplicative unit $e$ then $0\leq e$ and if $%
A$ is, in addition, Archimedean the $A$ is semiprime.
\end{enumerate}

\noindent \textsl{Throughout the paper, }$A$\textsl{\ is an Archimedean
semiprime }$f$\textsl{-algebra. }An element $a\in A$ is said to be \emph{%
quasi-invertible }if the equality%
\[
a+a^{\ast }=aa^{\ast }
\]%
holds for some $a^{\ast }\in A$. It is easily checked that such an element $%
a^{\ast }$ ---if one exists--- is unique, referred to as the \emph{%
quasi-inverse} of $a$. The set of all quasi-invertible elements in $A$ is
denoted by $Q\left( A\right) $. Now, $A$ is said to be \emph{bounded
quasi-inversion closed} if, for every $a\in A$, we have%
\[
\left\vert a\right\vert \leq \left\vert a^{2}-a\right\vert \text{ implies }%
a\in Q\left( A\right) .
\]%
The following lemma will be very useful later.

\begin{lemma}
\label{negative}Assume that $A$ is bounded quasi-inversion closed and let $%
a\in A$. If $a\leq 0$ then $a\in Q\left( A\right) $ and $0\leq a^{\ast }$.
\end{lemma}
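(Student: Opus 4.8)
The plan is to reduce the statement to the very definition of bounded quasi-inversion closedness and then to extract the sign information on the quasi-inverse by a multiplication trick inside the $f$-algebra.

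First I would check that $a$ meets the hypothesis in the definition of bounded quasi-inversion closedness. Since $a\leq 0$, the element $-a=\left\vert a\right\vert $ is positive, and $a^{2}=\left\vert a\right\vert ^{2}\geq 0$ because squares are positive in an $f$-algebra. Hence $a^{2}-a=a^{2}+\left\vert a\right\vert $ is a sum of positive elements, so it is itself positive and $\left\vert a^{2}-a\right\vert =a^{2}+\left\vert a\right\vert \geq \left\vert a\right\vert $. Thus $\left\vert a\right\vert \leq \left\vert a^{2}-a\right\vert $, and since $A$ is bounded quasi-inversion closed we conclude $a\in Q\left( A\right) $; write $a^{\ast }$ for its quasi-inverse, so that $a+a^{\ast }=aa^{\ast }$.

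It remains to prove $a^{\ast }\geq 0$, equivalently $\left( a^{\ast }\right) ^{-}=0$. Put $c=-a\geq 0$ and $b=a^{\ast }$; the quasi-inverse identity becomes $b+cb=c$. Multiplying this identity on both sides by the positive element $b^{-}$ and using $b\,b^{-}=\left( b^{+}-b^{-}\right) b^{-}=-\left( b^{-}\right) ^{2}$ (recall $b^{+}b^{-}=0$), I obtain $-\left( b^{-}\right) ^{2}-c\left( b^{-}\right) ^{2}=cb^{-}$, that is, $\left( b^{-}\right) ^{2}+c\left( b^{-}\right) ^{2}+cb^{-}=0$. Each of the three summands lies in $A_{+}$ since $A_{+}$ is closed under multiplication, so each of them vanishes; in particular $\left( b^{-}\right) ^{2}=0$, and semiprimeness of $A$ forces $b^{-}=0$. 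Therefore $a^{\ast }=b=b^{+}\geq 0$, which completes the argument.

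The only mildly delicate point is this last paragraph: in the absence of a multiplicative unit one cannot simply \textquotedblleft solve for $b$\textquotedblright\ from $b+cb=c$, so instead one localizes the sign information at the negative part of $b$ by multiplying through by $b^{-}$ and then invokes semiprimeness. Everything else is a routine check using the elementary $f$-algebra properties (squares positive, $b^{+}b^{-}=0$, and closedness of the positive cone under multiplication) recorded above.
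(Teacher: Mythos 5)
Your proof is correct and follows essentially the same route as the paper: verify $\left\vert a\right\vert \leq \left\vert a^{2}-a\right\vert$ directly from $a\leq 0$, then multiply the quasi-inverse identity by $\left(a^{\ast}\right)^{-}$, use $b^{+}b^{-}=0$ to get $\left(\left(a^{\ast}\right)^{-}\right)^{2}=0$, and conclude by semiprimeness. The only cosmetic difference is that the paper finishes with a sandwich $0\leq \left(b^{-}\right)^{2}\leq -\left(b^{-}\right)^{2}a\leq 0$ whereas you observe that a sum of three positive terms vanishing forces each term to vanish; these are the same argument.
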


\begin{proof}
Let $a\in A_{+}$ and observe that%
\[
\left\vert \left( -a\right) ^{2}-\left( -a\right) \right\vert =a^{2}+a\geq
a=\left\vert -a\right\vert .
\]%
It follows that $-a\in Q\left( A\right) $ because $A$ is bounded
quasi-inversion closed. For brevity, put $b=\left( -a\right) ^{\ast }$.
Multiplying the equality $ba=a-b$ by  $b^{-}$, we get%
\[
-\left( b^{-}\right) ^{2}a=b^{-}a+\left( b^{-}\right) ^{2}.
\]%
Therefore,%
\[
0\leq \left( b^{-}\right) ^{2}\leq b^{-}a+\left( b^{-}\right) ^{2}=-\left(
b^{-}\right) ^{2}a\leq 0.
\]%
Consequently, $\left( b^{-}\right) ^{2}=0$ and so $b^{-}=0$ because $A$ is
semiprime. This shows that $b\in A_{+}$ and completes the proof of the lemma.
\end{proof}

Recall that if $A$ has a multiplicative unit $e$, then $A\ $is said to be 
\emph{bounded inversion closed} if $a\in A$ has an inverse in $A$ whenever $%
e\leq a$. Next, we shall see that bounded quasi-inversion closedness and
bounded inversion closedness coincide in the unital case.

\begin{theorem}
\label{=}If $A$ is unital then $A$ is bounded quasi-inversion closed if and
only if $A$ is bounded inversion closed.
\end{theorem}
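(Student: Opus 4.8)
The plan rests on one elementary observation linking the two notions: when $A$ has a unit $e$, an element $a\in A$ is quasi-invertible if and only if $e-a$ is invertible, and in that case $a^{\ast}=e-(e-a)^{-1}$. This follows at once from the identity $(e-a)(e-a^{\ast})=e-(a+a^{\ast}-aa^{\ast})$ together with commutativity of $A$ (property 1): the left-hand side equals $e$ precisely when $a+a^{\ast}=aa^{\ast}$. With this bridge available, the two implications are handled quite differently.

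For ``bounded inversion closed $\Rightarrow$ bounded quasi-inversion closed'', I would take $a\in A$ with $|a|\le|a^{2}-a|$ and aim to show $e-a$ is invertible. Put $s=|e-a|$. Since $a^{2}-a=a(a-e)$, property 3 gives $|a^{2}-a|=|a|\,s$, so the hypothesis reads $|a|\le|a|\,s$, i.e. $|a|(s-e)\ge0$. The heart of the matter is to upgrade this multiplicative inequality to the order inequality $s\ge e$, which I would do in three moves. First, multiplying $|a|(s-e)\ge0$ by $(s-e)^{-}\ge0$ and using $(s-e)^{+}(s-e)^{-}=0$ (property 6), one finds $|a|\big((s-e)^{-}\big)^{2}\le0$; being also $\ge0$ it vanishes, and then $\big(|a|(s-e)^{-}\big)^{2}=|a|\cdot|a|\big((s-e)^{-}\big)^{2}=0$ forces $|a|(s-e)^{-}=0$ by semiprimeness, that is $|a|(e-s)^{+}=0$, hence $|a|\wedge(e-s)^{+}=0$ again by property 6. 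Second, $s=|e-a|\ge|e|-|a|=e-|a|$, so $(e-s)^{+}\le|a|$. Third, these two facts force $(e-s)^{+}=(e-s)^{+}\wedge|a|=0$, i.e. $s\ge e$. Now $|e-a|\ge e$, so bounded inversion closedness makes $|e-a|$ invertible; then $(e-a)^{2}=|e-a|^{2}$ (property 5) is invertible, hence so is $e-a$, and the bridge gives $a\in Q(A)$.

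For the converse, ``bounded quasi-inversion closed $\Rightarrow$ bounded inversion closed'', Lemma \ref{negative} already does the work. Given $a\ge e$, set $c=e-a\le0$. By Lemma \ref{negative} we have $c\in Q(A)$, and the bridge immediately yields that $e-c=a$ is invertible.

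The main obstacle is the three-move step in the first implication: promoting $|a|\le|a|\,|e-a|$ to $|e-a|\ge e$. The naive impulse is to ``cancel $|a|$'', but $|a|$ need not be invertible, so one must instead deal separately with the band on which $|a|$ is small; this is exactly what the disjointness $|a|\wedge(e-s)^{+}=0$, combined with the bound $(e-s)^{+}\le|a|$, accomplishes. Everything else is routine vector-lattice and $f$-algebra bookkeeping.
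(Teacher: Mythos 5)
Your proposal is correct and follows essentially the same route as the paper: the reverse direction invokes Lemma \ref{negative} applied to $e-a\le 0$, and the forward direction upgrades $|a|\le|a|\,|e-a|$ to $|e-a|\ge e$ by killing $\left(|e-a|-e\right)^{-}$ using semiprimeness together with the bound $\left(|e-a|-e\right)^{-}\le|a|$, exactly as in the paper (your use of disjointness versus the paper's squaring of $\left(|e-a|-e\right)^{-}$ is only a cosmetic difference, as is your explicit ``bridge'' lemma, which the paper verifies inline each time). The only blemish is a harmless mislabeling: $(s-e)^{+}(s-e)^{-}=0$ is property 4 of the paper's list, not property 6.
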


\begin{proof}
Let $e$ denote the multiplicative unit of $A$. Assume that $A$ is bounded
quasi-inversion closed and pick $a\in A$ with $a\geq e$. Since $a-e\geq 0$,
Lemma \ref{negative} tells us that $e-a\in Q\left( A\right) $ and so the
equality%
\[
\left( e-a\right) c=e-a+c
\]%
holds for some $c\in A$. Therefore, $a\left( e-c\right) =e$ and so $a$ has
an inverse in $A$. This means that $A$ is bounded inversion closed.

Conversely, assume that $A$ is bounded inversion closed and choose $a\in A$
such that $\left\vert a\right\vert \leq \left\vert a^{2}-a\right\vert $.
Hence,%
\[
\left( \left\vert e-a\right\vert -e\right) \left\vert a\right\vert \geq 0
\]%
from which it follows that%
\[
\left( \left\vert e-a\right\vert -e\right) ^{-}\left\vert a\right\vert =0.
\]%
Moreover,%
\[
\left( \left\vert e-a\right\vert -e\right) ^{-}\leq \left\vert \left\vert
e-a\right\vert -e\right\vert \leq \left\vert a\right\vert .
\]%
Accordingly,%
\[
0\leq \left( \left( \left\vert e-a\right\vert -e\right) ^{-}\right) ^{2}\leq
\left( \left\vert e-a\right\vert -e\right) ^{-}\left\vert a\right\vert =0
\]%
and thus $\left( \left\vert e-a\right\vert -e\right) ^{-}=0$. We derive that 
$\left\vert e-a\right\vert \geq e$. But then $\left\vert e-a\right\vert $ is
invertible in $A$ (and so is $e-a$) as $A$ is bounded inversion closed.
There exists therefore $b\in A$ such that $\left( e-a\right) b=e$.
Consequently,%
\[
a\left( e-b\right) =a-ab=a+\left( e-b\right) ,
\]%
which yields that $a\in Q\left( A\right) $ and then $A$ is bounded
quasi-inversion closed.
\end{proof}

For every $a\in A$, an operator $\pi _{a}$ can be defined on $A$ by%
\[
\pi _{a}\left( x\right) =ax\text{ for all }x\in A.
\]%
It follows directly from the definition of an $f$-algebra that%
\[
\pi _{a}\in \mathrm{Orth}\left( A\right) \text{ for all }a\in A,
\]%
where $\mathrm{Orth}\left( A\right) $ denotes the unital Archimedean $f$%
-algebra of all orthomorphisms on $A$. Moreover, the operator $\pi $ from $A$
into $\mathrm{Orth}\left( A\right) $ given by%
\[
\pi \left( a\right) =\pi _{a}\text{ for all }a\in A
\]%
is an injective algebra and lattice homomorphism. Accordingly, identifying
any $a\in A$ with the orthomorphism $\pi _{a}$, $A$ can be regarded as a
subalgebra and a vector sublattice of $A$. Furthermore, it is not hard to
see that, for every $a\in A$, the equality $\pi \pi _{a}=\pi _{\pi \left(
a\right) }$ holds and so $A$ is even a ring ideal of $\mathrm{Orth}\left(
A\right) $. Lastly, if $A$ is relatively uniformly complete, then so is $%
\mathrm{Orth}\left( A\right) $ (see \cite{LZ71} for relatively uniform
completeness). The reader can consult the books \cite{BKW77,Z83} to have
more detailed information about orthomorphisms. However, we do believe that
the manuscript \cite{P81} remains the most complete reference on the subject.

\begin{theorem}
\label{ru}If $A$ is relatively uniformly complete then $A$ is bounded
quasi-inversion closed.
\end{theorem}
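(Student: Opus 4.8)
The plan is to transport the whole question into the orthomorphism algebra $R:=\mathrm{Orth}\left( A\right) $. As recalled just above, $R$ is a unital Archimedean $f$-algebra (hence semiprime), it is relatively uniformly complete because $A$ is, and $A$ sits inside $R$ as a ring ideal. I claim it is enough to prove that $R$ is bounded inversion closed. Indeed, granting this, Theorem \ref{=} applied to $R$ shows that $R$ is bounded quasi-inversion closed; so if $a\in A$ satisfies $\left\vert a\right\vert \leq \left\vert a^{2}-a\right\vert $ (the same condition whether read in $A$ or in $R$, since $A$ is a subalgebra and a vector sublattice of $R$), then $a\in Q\left( R\right) $, say $a+a^{\ast }=aa^{\ast }$ with $a^{\ast }\in R$. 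Since $A$ is a ring ideal of $R$ and $a\in A$, we get $aa^{\ast }\in A$, and therefore $a^{\ast }=aa^{\ast }-a\in A$. Hence $a\in Q\left( A\right) $, and $A$ is bounded quasi-inversion closed.

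So it remains to show that a relatively uniformly complete unital Archimedean $f$-algebra $R$, with unit $e$, is bounded inversion closed. I fix $u\geq e$ and put $u_{n}=u\wedge ne$ for $n\geq 1$, so that $e\leq u_{n}\leq ne$. First I would note that each $u_{n}$ is invertible in $R$: since $0\leq e-\frac{1}{n}u_{n}\leq \left( 1-\frac{1}{n}\right) e$, the $k$-th power of $e-\frac{1}{n}u_{n}$ is dominated by $\left( 1-\frac{1}{n}\right) ^{k}e$ for each $k$, so the partial sums of the Neumann series $\sum_{k\geq 0}\left( e-\frac{1}{n}u_{n}\right) ^{k}$ form a relatively uniformly Cauchy sequence and, by completeness, converge in $R$ to an inverse of $\frac{1}{n}u_{n}$; write $v_{n}=u_{n}^{-1}$, with $0\leq v_{n}\leq e$. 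The heart of the matter is the estimate $0\leq v_{n}-v_{m}\leq \frac{1}{n}e$ for all $m\geq n$. To obtain it, set $w=u_{m}-u_{n}\geq 0$; since $w\leq \left( u-ne\right) ^{+}$, the positive elements $w$ and $\left( ne-u\right) ^{+}=ne-u_{n}$ are disjoint, so $w\left( ne-u\right) ^{+}=0$ (recall $R$ is semiprime), which together with $u_{n}=ne-\left( ne-u\right) ^{+}$ gives $wu_{n}=nw$; multiplying by $v_{n}$ (recall that $R$ is commutative and $v_{n}u_{n}=e$) we find $v_{n}w=\frac{1}{n}w$. Consequently
\[
v_{n}=v_{n}\left( v_{m}u_{m}\right) =v_{m}v_{n}\left( u_{n}+w\right) =v_{m}\left( e+\frac{1}{n}w\right) =v_{m}+\frac{1}{n}wv_{m},
\]
so $v_{n}-v_{m}=\frac{1}{n}wv_{m}$, and $0\leq w\leq u_{m}$ forces $0\leq v_{n}-v_{m}\leq \frac{1}{n}u_{m}v_{m}=\frac{1}{n}e$.

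It follows that $\left( v_{n}\right) $ is relatively uniformly Cauchy with $e$ as dominating element, so it converges relatively uniformly to some $v\in R$ with $0\leq v\leq e$. To conclude I would check that $uv=e$. Multiplication by $u$ is relatively uniformly continuous (as $\left\vert ux_{k}-ux\right\vert =u\left\vert x_{k}-x\right\vert $), hence $uv=\lim_{n}uv_{n}$. Writing $u=u_{n}+\left( u-ne\right) ^{+}$ and rerunning the disjointness computation ($\left( u-ne\right) ^{+}$ and $\left( ne-u\right) ^{+}$ are disjoint, so $\left( u-ne\right) ^{+}u_{n}=n\left( u-ne\right) ^{+}$ and then $v_{n}\left( u-ne\right) ^{+}=\frac{1}{n}\left( u-ne\right) ^{+}$) gives $uv_{n}=e+\frac{1}{n}\left( u-ne\right) ^{+}$; since $0\leq \left( u-ne\right) ^{+}\leq u$, the term $\frac{1}{n}\left( u-ne\right) ^{+}$ tends to $0$ relatively uniformly, whence $uv=e$. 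Thus $u$ is invertible, $R$ is bounded inversion closed, and the theorem follows.

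The step I expect to be the main obstacle is the inequality $v_{n}-v_{m}\leq \frac{1}{n}e$: any crude estimate (for instance $v_{n}-v_{m}\leq \frac{m-n}{n}v_{m}$) blows up as $m\rightarrow \infty $, and the real point is that the increment $u_{m}-u_{n}$ is carried by the band on which $u\geq ne$, where $v_{n}$ acts like $\frac{1}{n}e$; the identity $w\left( ne-u_{n}\right) =0$ is exactly what makes this precise. Should a shorter route be preferred, the passage to the unital case can instead be handled by citing that a relatively uniformly complete unital Archimedean $f$-algebra is bounded inversion closed, see \cite{HJ61}.
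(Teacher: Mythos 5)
Your argument is correct and is essentially the paper's own: both pass to $\mathrm{Orth}\left( A\right) $, use that this unital, relatively uniformly complete Archimedean $f$-algebra is bounded inversion closed, and exploit the fact that $A$ is a ring ideal of $\mathrm{Orth}\left( A\right) $ to bring the quasi-inverse back into $A$ (the paper redoes the computation of Theorem \ref{=} inline for the unit $I$ rather than citing that theorem, which is immaterial). The only substantive difference is that you prove the bounded inversion closedness of $\mathrm{Orth}\left( A\right) $ from scratch via the truncations $u\wedge ne$ and a Neumann series --- a correct and self-contained argument --- whereas the paper simply cites Theorem 3.4 of \cite{HP82}, which is exactly the shortcut you mention in your closing remark.
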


\begin{proof}
Assume that $A$ is uniformly complete. Choose $a\in A$ such that $\left\vert
a\right\vert \leq \left\vert a^{2}-a\right\vert $ in $A$. Hence, the
inequality%
\[
0\leq \left( \left\vert I-a\right\vert -I\right) \left\vert a\right\vert 
\]%
holds in $\mathrm{Orth}\left( A\right) $, where $I$ denotes the identity
operator on $A$ (which is the multiplicative unit of $\mathrm{Orth}\left(
A\right) $). It follows that%
\[
\left( \left\vert I-a\right\vert -I\right) ^{-}\left\vert a\right\vert =0.
\]%
Moreover,%
\[
\left( \left\vert I-a\right\vert -I\right) ^{-}\leq \left\vert \left\vert
I-a\right\vert -I\right\vert \leq \left\vert a\right\vert .
\]%
Therefore,%
\[
0\leq \left( \left( \left\vert I-a\right\vert -I\right) ^{-}\right) ^{2}\leq
\left( \left\vert I-a\right\vert -I\right) ^{-}\left\vert a\right\vert =0
\]%
and thus $\left( \left\vert I-a\right\vert -I\right) ^{-}=0$. In other
words, $I\leq \left\vert I-a\right\vert $ in $\mathrm{Orth}\left( A\right) $%
. But then $\left\vert I-a\right\vert $ has an inverse in $\mathrm{Orth}%
\left( A\right) $ because, being relatively uniformly complete, $\mathrm{Orth%
}\left( A\right) $ is bounded inversion closed (Theorem 3.4 in \cite{HP82}).
Consequently, $I-a$ has an inverse in $\mathrm{Orth}\left( A\right) $, so
there exists $\pi \in \mathrm{Orth}\left( A\right) $ such that $\left(
I-a\right) \pi =I$. This yields that $I-\pi =-a\pi $ and thus $I-\pi \in A$
because $A$ is a ring ideal in $\mathrm{Orth}\left( A\right) $. Put $b=I-\pi 
$ and observe that%
\[
ab=a\left( I-\pi \right) =a-a\pi =a+I-\pi =a+b.
\]%
This shows that $A$ is bounded quasi-inversion closed and completes the
proof of the theorem.
\end{proof}

On closer examination of the proof of Theorem \ref{ru}, it can be deduced
that if $A$ is unital and bounded inversion closed, then any vector
sublattice of $A$ which is simultaneously a ring ideal of $A$ is bounded
quasi-inversion closed as a subalgebra of $A$. This fact extends Sufficiency
in Theorem \ref{=}.

\section{Intermediate algebras and applications}

\begin{quote}
\textsl{Yet again, }$A$ \textsl{stands for an Archimedean semiprime }$f$%
\textsl{-algebra.}
\end{quote}

\noindent Our investigation starts with the following elementary properties
which will come in handy later on.

\begin{lemma}
\label{elem}The following hold for all $a,b,c\in A$.

\begin{enumerate}
\item[\emph{(i)}] If $ax\leq bx$ for all $x\in A_{+}$ then $a\leq b$.

\item[\emph{(ii)}] If $a^{2}\left\vert b\right\vert \leq \left\vert
ac\right\vert $ then $\left\vert ab\right\vert \leq \left\vert c\right\vert $%
.
\end{enumerate}
\end{lemma}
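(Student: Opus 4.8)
The plan is to prove the two parts independently; in each case I will produce a positive element whose square is $0$ and then invoke semiprimeness of $A$.

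For \emph{(i)}, put $c=b-a$, so that the hypothesis reads exactly $cx\geq 0$ for every $x\in A_{+}$. Specializing to $x=c^{-}\in A_{+}$ gives $cc^{-}\geq 0$; on the other hand, writing $c=c^{+}-c^{-}$ and using $c^{+}c^{-}=0$, one computes $cc^{-}=-(c^{-})^{2}$. Since squares in $A$ are positive, this forces $(c^{-})^{2}=0$, hence $c^{-}=0$ because $A$ is semiprime, i.e. $a\leq b$. The only point to check is that multiplication by the positive element $c^{-}$ preserves the inequality $cx\geq 0$, which is immediate since multiplication by a positive element preserves order.

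For \emph{(ii)}, write $u=|a|$, $v=|b|$, $w=|c|$, all in $A_{+}$. Using $a^{2}=|a|^{2}$ and $|ac|=|a||c|$, the hypothesis becomes $u^{2}v\leq uw$, while the desired conclusion $|ab|\leq |c|$ becomes $uv\leq w$, i.e. $g:=(uv-w)^{+}=0$. The natural temptation is to cancel a factor of $u$ from $u^{2}v\leq uw$, but $u$ may be a zero divisor, and getting around this is the real (mild) obstacle. Instead I would argue as follows. First, since multiplication by the positive element $u$ is a lattice homomorphism, $gu=\left( (uv-w)u\right)^{+}=(u^{2}v-uw)^{+}=0$ by hypothesis. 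Second, since $w\geq 0$ we have $uv-w\leq uv$ with $uv\geq 0$, hence $g\leq uv$. Multiplying this last inequality by $g\geq 0$ and using commutativity, $g^{2}\leq g\,uv=(gu)v=0$, so $g^{2}=0$, and therefore $g=0$ by semiprimeness; that is, $|ab|\leq |c|$.

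So I anticipate no genuine difficulty: the two substitutes for the unavailable cancellation of $u$ are first killing $gu$ via the lattice-homomorphism property of multiplication by a positive element, and then passing from $g^{2}=0$ to $g=0$ through semiprimeness.
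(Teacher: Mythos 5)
Your proof is correct and follows essentially the same strategy as the paper's: in both parts you isolate the relevant positive part, show its square vanishes by multiplying through by a suitable positive element (using that such multiplications are lattice homomorphisms), and conclude via semiprimeness. The only cosmetic differences are that in (i) you invoke $c^{+}c^{-}=0$ where the paper plugs $x=(a-b)^{+}$ into the identity $(a-b)^{+}x=(ax-bx)^{+}$, and in (ii) you factor the paper's one-line estimate $\left(\left(\left\vert ab\right\vert-\left\vert c\right\vert\right)^{+}\right)^{2}\leq\left\vert b\right\vert\left(a^{2}\left\vert b\right\vert-\left\vert ac\right\vert\right)^{+}=0$ into two steps.
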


\begin{proof}
$\mathrm{(i)}$ If $a\leq b$ and $x\in A_{+}$ then, of course, $ax\leq bx$.
Conversely, assume that $ax\leq bx$ for all $x\in A_{+}$. Hence, of every $%
x\in A_{+}$, we have%
\[
\left( a-b\right) ^{+}x=\left( ax-bx\right) ^{+}=0. 
\]%
In particular, $\left( \left( a-b\right) ^{+}\right) ^{2}=0$ and so $\left(
a-b\right) ^{+}=0$ as $A$ is semiprime This yields that $a\leq b$ and
completes the proof of $\mathrm{(i)}$.

$\mathrm{(ii)}$ Assume that $a^{2}\left\vert b\right\vert \leq \left\vert
ac\right\vert $ and observe that%
\[
0\leq \left( \left( \left\vert ab\right\vert -\left\vert c\right\vert
\right) ^{+}\right) ^{2}\leq \left\vert ab\right\vert \left( \left\vert
ab\right\vert -\left\vert c\right\vert \right) ^{+}=\left\vert b\right\vert
\left( a^{2}\left\vert b\right\vert -\left\vert ac\right\vert \right) ^{+}=0.
\]%
But then $\left( \left\vert ab\right\vert -\left\vert c\right\vert \right)
^{+}=0$ because $A$ is semiprime and so $\left\vert ab\right\vert \leq
\left\vert c\right\vert $, which is the required inequality.
\end{proof}

The principal order ideal of $A$ generated by an element $a\in A$ is denoted
by $A\left( a\right) $, that is,%
\[
A\left( a\right) =\left\{ x\in A:\left\vert x\right\vert \leq 
%TCIMACRO{\U{b5}}%
%BeginExpansion
{\mu}%
%EndExpansion
\left\vert a\right\vert \text{ for some }%
%TCIMACRO{\U{b5}}%
%BeginExpansion
{\mu}%
%EndExpansion
\in \left( 0,\infty \right) \right\} 
\]%
An element $a\in A$ is said to be \emph{bounded} if $a^{2}\in A\left(
a\right) $. The set of all bounded elements in $A$ is denoted by $A_{b}$. An
alternative characterization of bounded elements in $A$ is given next.

\begin{lemma}
\label{Pagter}If $a$ is an element in $A$ then $a\in A_{b}$ if and only if%
\[
ax\in A\left( x\right) \text{ for all }x\in A.
\]
\end{lemma}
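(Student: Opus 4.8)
The plan is to prove the two implications separately, with essentially all of the work concentrated in the ``only if'' direction. The ``if'' direction is immediate: if $ax\in A\left( x\right) $ for every $x\in A$, then specializing to $x=a$ gives $a^{2}=a\cdot a\in A\left( a\right) $, which is exactly the definition of $a\in A_{b}$.

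So suppose now that $a\in A_{b}$, say $a^{2}\leq \mu \left\vert a\right\vert $ with $\mu \in \left( 0,\infty \right) $, and fix an arbitrary $x\in A$; I want to show $ax\in A\left( x\right) $, and I expect the constant $\mu $ itself to do the job. Multiplying the inequality $a^{2}\leq \mu \left\vert a\right\vert $ on the right by the positive element $\left\vert x\right\vert $ (multiplication by a positive element is order preserving) gives $a^{2}\left\vert x\right\vert \leq \mu \left\vert a\right\vert \left\vert x\right\vert $. Using the identity $\left\vert a\right\vert \left\vert x\right\vert =\left\vert ax\right\vert $ and, since $\mu >0$, the identity $\mu \left\vert ax\right\vert =\left\vert a\left( \mu x\right) \right\vert $, the right-hand side is nothing but $\left\vert a\left( \mu x\right) \right\vert $. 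Thus $a^{2}\left\vert x\right\vert \leq \left\vert a\left( \mu x\right) \right\vert $, which is precisely the hypothesis of Lemma \ref{elem}(ii) with $b=x$ and $c=\mu x$. The conclusion of that lemma then reads $\left\vert ax\right\vert \leq \left\vert \mu x\right\vert =\mu \left\vert x\right\vert $, so $ax\in A\left( x\right) $, and since $x$ was arbitrary this completes the argument.

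The proof is short and I do not anticipate a real obstacle: the only delicate points are the elementary $f$-algebra bookkeeping (order preservation under multiplication by positive elements, $\left\vert a\right\vert \left\vert x\right\vert =\left\vert ax\right\vert $, and $\left\vert \mu x\right\vert =\mu \left\vert x\right\vert $ for $\mu >0$) together with the observation that the inequality obtained is exactly in the shape required by Lemma \ref{elem}(ii). If one prefers not to appeal to that lemma, the same conclusion can be reached directly: from $a^{2}\leq \mu \left\vert a\right\vert $ one obtains $\left\vert ax\right\vert ^{2}=a^{2}x^{2}\leq \mu \left\vert a\right\vert \left\vert x\right\vert ^{2}=\mu \left\vert ax\right\vert \left\vert x\right\vert $, and then the standard cancellation trick in a semiprime $f$-algebra --- multiply $0\leq \left( \left\vert ax\right\vert -\mu \left\vert x\right\vert \right) ^{+}\leq \left\vert ax\right\vert $ by $\left( \left\vert ax\right\vert -\mu \left\vert x\right\vert \right) ^{+}$, note that the product of the two outer terms vanishes, and use that a square is $0$ only for $0$ --- forces $\left\vert ax\right\vert \leq \mu \left\vert x\right\vert $.
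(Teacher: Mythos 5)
Your proof is correct and follows essentially the same route as the paper: the ``if'' direction by specializing to $x=a$, and the ``only if'' direction by multiplying $a^{2}\leq\mu\left\vert a\right\vert$ by a positive element and invoking Lemma \ref{elem}(ii) with $b=x$ and $c=\mu x$. Your closing alternative merely unwinds the proof of Lemma \ref{elem}(ii) (the semiprime cancellation trick), and your use of $\left\vert x\right\vert$ handles general $x$ slightly more explicitly than the paper, which argues only for $x\in A_{+}$.
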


\begin{proof}
The \textquotedblleft if\textquotedblright\ part being trivial, we prove the
\textquotedblleft only if\textquotedblright\ part. So, suppose that $a\in
A_{b}$ and pick $%
%TCIMACRO{\U{b5}}%
%BeginExpansion
{\mu}%
%EndExpansion
\in \left( 0,\infty \right) $ such that $a^{2}\leq 
%TCIMACRO{\U{b5}}%
%BeginExpansion
{\mu}%
%EndExpansion
\left\vert a\right\vert $. Hence, if $x\in A_{+}$ then $a^{2}x\leq 
%TCIMACRO{\U{b5}}%
%BeginExpansion
{\mu}%
%EndExpansion
\left\vert a\right\vert x$. Using conveniently Lemma \ref{elem} $\mathrm{(ii)%
}$, we get $\left\vert a\right\vert x\leq 
%TCIMACRO{\U{b5}}%
%BeginExpansion
{\mu}%
%EndExpansion
x$ and we are done.
\end{proof}

The structure of $A_{b}$ is given in what follows.

\begin{theorem}
\label{structure}The set $A_{b}$ is simultaneously a subalgebra and an order
ideal of $A$.
\end{theorem}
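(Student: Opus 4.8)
The plan is to deduce everything from the characterisation of $A_{b}$ supplied by Lemma \ref{Pagter}: an element $a\in A$ lies in $A_{b}$ precisely when $ax\in A\left( x\right) $ for every $x\in A$. Two trivial observations about principal order ideals will be used repeatedly: first, $A\left( x\right) $ is a vector subspace of $A$; second, the assignment $x\mapsto A\left( x\right) $ is monotone in the sense that $A\left( y\right) \subseteq A\left( x\right) $ whenever $y\in A\left( x\right) $ (indeed $\left\vert y\right\vert \leq \mu \left\vert x\right\vert $ together with $\left\vert z\right\vert \leq \nu \left\vert y\right\vert $ forces $\left\vert z\right\vert \leq \nu \mu \left\vert x\right\vert $).

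To see that $A_{b}$ is a subalgebra I would argue as follows. If $a,b\in A_{b}$ and $\lambda \in \mathbb{R}$, then for each $x\in A$ both $ax$ and $bx$ belong to $A\left( x\right) $, hence so does $\left( \lambda a+b\right) x=\lambda \left( ax\right) +bx$ because $A\left( x\right) $ is a subspace; by Lemma \ref{Pagter}, $\lambda a+b\in A_{b}$ (and clearly $0\in A_{b}$), so $A_{b}$ is a vector subspace. For closure under multiplication, fix $a,b\in A_{b}$ and $x\in A$. Since $b\in A_{b}$ we have $bx\in A\left( x\right) $, and applying Lemma \ref{Pagter} to the element $bx$ (using $a\in A_{b}$) gives $a\left( bx\right) \in A\left( bx\right) $. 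By the monotonicity noted above, $A\left( bx\right) \subseteq A\left( x\right) $, so $\left( ab\right) x=a\left( bx\right) \in A\left( x\right) $; as $x$ was arbitrary, $ab\in A_{b}$.

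It then remains to check that $A_{b}$ is solid. Let $y\in A_{b}$ and let $x\in A$ satisfy $\left\vert x\right\vert \leq \left\vert y\right\vert $. For every $z\in A$ we have $\left\vert xz\right\vert =\left\vert x\right\vert \left\vert z\right\vert \leq \left\vert y\right\vert \left\vert z\right\vert =\left\vert yz\right\vert $, and since $y\in A_{b}$ there is $\mu \in \left( 0,\infty \right) $ with $\left\vert yz\right\vert \leq \mu \left\vert z\right\vert $; hence $\left\vert xz\right\vert \leq \mu \left\vert z\right\vert $, that is, $xz\in A\left( z\right) $. By Lemma \ref{Pagter}, $x\in A_{b}$, and together with the previous paragraph this shows $A_{b}$ is an order ideal of $A$.

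I do not expect a genuine obstacle here, since the substantive work is already contained in Lemma \ref{Pagter}. The only spot calling for a little care is the multiplication step, where Lemma \ref{Pagter} must be applied to the auxiliary element $bx$ rather than to $x$ itself, after which one invokes the elementary but indispensable inclusion $A\left( bx\right) \subseteq A\left( x\right) $.
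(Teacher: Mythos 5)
Your proof is correct and follows essentially the same route as the paper: both deduce all three closure properties (subspace, product, solidity) from the characterization of $A_{b}$ in Lemma \ref{Pagter}. The only cosmetic difference is that you phrase the verifications via membership in the principal ideals $A\left( x\right) $ and the inclusion $A\left( bx\right) \subseteq A\left( x\right) $, whereas the paper writes out the corresponding inequalities $\left\vert a\right\vert x\leq \mu x$ for $x\in A_{+}$ directly.
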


\begin{proof}
Clearly, $0\in A_{b}$ and so $A_{b}$ is nonempty. Let $a,b\in A_{b}$ and $%
\delta \in \mathbb{R}$. Clearly, from Lemma \ref{Pagter} it follows that
there exists $%
%TCIMACRO{\U{b5}}%
%BeginExpansion
{\mu}%
%EndExpansion
\in \left( 0,\infty \right) $ such that%
\[
\left\vert a\right\vert x\leq 
%TCIMACRO{\U{b5}}%
%BeginExpansion
{\mu}%
%EndExpansion
x\ \text{and }\left\vert b\right\vert x\leq 
%TCIMACRO{\U{b5}}%
%BeginExpansion
{\mu}%
%EndExpansion
x\quad \text{for all }x\in A_{+}.
\]%
Hence, if $x\in A_{+}$ then%
\[
0\leq \left\vert a+\delta b\right\vert x\leq \left\vert a\right\vert
x+\left\vert \delta \right\vert \left\vert b\right\vert x\leq \left(
1+\left\vert \delta \right\vert \right) 
%TCIMACRO{\U{b5}}%
%BeginExpansion
{\mu}%
%EndExpansion
x.
\]%
We derive that $a+\delta b\in A_{b}$ and so $A_{b}$ is a vector subspace of $%
A$. Now, if $x\in X$ then%
\[
\left\vert ab\right\vert x=\left\vert a\right\vert \left\vert b\right\vert
x\leq 
%TCIMACRO{\U{b5}}%
%BeginExpansion
{\mu}%
%EndExpansion
\left\vert b\right\vert x\leq 
%TCIMACRO{\U{b5}}%
%BeginExpansion
{\mu}%
%EndExpansion
^{2}x.
\]%
This means that $ab\in A_{b}$ from which it follows that $A_{b}$ is a
subalgebra of $A$. Finally, assume that $\left\vert c\right\vert \leq
\left\vert a\right\vert $ for some $c\in A$. Hence,%
\[
\left\vert c\right\vert x\leq \left\vert a\right\vert x\leq 
%TCIMACRO{\U{b5}}%
%BeginExpansion
{\mu}%
%EndExpansion
x\quad \text{for all }x\in A_{+}.
\]%
and so $c\in A_{b}$. Accordingly, $A_{b}$ is an order ideal of $A$ and the
proof is complete.
\end{proof}

It should be pointed out that a look at bounded elements in an Archimedean
semiprime $f$-algebra from a different point of view can be found in \cite%
{T00} by Triki.

In spite of its simplicity, the following fact which plays a key role in the
proof of the main theorem of this paper.

\begin{lemma}
\label{square}If $a\in A$ then $a\in A_{b}$ if and only if $a^{2}\in A_{b}$.
\end{lemma}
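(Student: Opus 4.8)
The plan is to exploit Lemma~\ref{Pagter}, which says that $a\in A_{b}$ if and only if $ax\in A(x)$ for all $x\in A$. The "only if" direction is immediate: if $a\in A_{b}$ then $A_{b}$ is a subalgebra by Theorem~\ref{structure}, hence $a^{2}\in A_{b}$. The real content is the converse, so assume $a^{2}\in A_{b}$ and pick $\mu\in(0,\infty)$ with $\left(a^{2}\right)^{2}\leq\mu\left\vert a^{2}\right\vert$, i.e.\ $a^{4}\leq\mu a^{2}$ (using that squares are positive). I want to conclude $a^{2}\leq\nu\left\vert a\right\vert$ for some $\nu$.

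The key device is Lemma~\ref{elem}(ii), which converts an inequality of the form $a^{2}\left\vert b\right\vert \leq\left\vert ac\right\vert$ into $\left\vert ab\right\vert \leq\left\vert c\right\vert$. First I would show directly that $a^{2}\in A_{b}$ forces $a\left\vert a\right\vert$, equivalently $\left\vert a\right\vert^{2}=a^{2}$, to behave boundedly against $\left\vert a\right\vert$; more precisely, from $a^{4}\leq\mu a^{2}$ I can write $a^{2}\cdot a^{2}\leq\mu a^{2}=\mu\left\vert a\cdot a\right\vert$, and apply Lemma~\ref{elem}(ii) with $b=a^{2}$ and $c=\mu a$ (so that $\left\vert ac\right\vert=\mu a^{2}$), yielding $\left\vert a\cdot a^{2}\right\vert\leq\mu\left\vert a\right\vert$, that is, $\left\vert a\right\vert^{3}\leq\mu\left\vert a\right\vert$. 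So the bound descends one power at a time. Now I repeat the trick: from $\left\vert a\right\vert^{3}=a^{2}\left\vert a\right\vert\leq\mu\left\vert a\right\vert$ I would like to peel off another factor, but Lemma~\ref{elem}(ii) in the form I need requires the right side to be $\left\vert ac\right\vert$; writing $\mu\left\vert a\right\vert$ is not obviously of that shape. Instead, I observe $a^{2}\left\vert a\right\vert\leq\mu\left\vert a\right\vert$ already gives, by Lemma~\ref{elem}(i) after multiplying by an arbitrary $x\in A_{+}$ is not what is needed either; rather, one multiplies the inequality $\left\vert a\right\vert^{3}\le \mu|a|$ and uses that $A$ is semiprime together with property~7 of $f$-algebras.

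The clean route, which I expect is the intended one, is: from $\left\vert a\right\vert^{3}\leq\mu\left\vert a\right\vert$ consider the element $u=\left(\left(\left\vert a\right\vert^{2}-\mu\right)^{+}\wedge\left\vert a\right\vert\right)$—or more simply, multiply $\left(\left\vert a\right\vert^{2}-\mu\right)^{+}$ by $\left\vert a\right\vert^{2}$. Indeed $\left\vert a\right\vert^{2}\left(\left\vert a\right\vert^{2}-\mu\right)^{+}=\left(\left\vert a\right\vert^{4}-\mu\left\vert a\right\vert^{2}\right)^{+}$ by property~2 (multiplication by the positive element $\left\vert a\right\vert^{2}$ is a lattice homomorphism), and since $\left\vert a\right\vert^{4}=a^{4}\leq\mu a^{2}=\mu\left\vert a\right\vert^{2}$ this is $0$. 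Thus $0\leq\left(\left(\left\vert a\right\vert^{2}-\mu\right)^{+}\left\vert a\right\vert\right)^{2}\leq\left\vert a\right\vert^{2}\left(\left\vert a\right\vert^{2}-\mu\right)^{+}=0$, so $\left(\left\vert a\right\vert^{2}-\mu\right)^{+}\left\vert a\right\vert=0$ since $A$ is semiprime. This says $\left\vert a\right\vert^{2}\wedge\mu$ and $\left\vert a\right\vert$ interact so that $\left\vert a\right\vert^{3}=\left(\left\vert a\right\vert^{2}\wedge\mu\right)\left\vert a\right\vert\leq\mu\left\vert a\right\vert$, but more to the point, $\left(\left\vert a\right\vert^{2}-\mu\right)^{+}\left\vert a\right\vert=0$ combined with property~6 gives $\left\vert\left(\left\vert a\right\vert^{2}-\mu\right)^{+}\right\vert\wedge\left\vert\,\left\vert a\right\vert\,\right\vert=0$, whence $\left(\left\vert a\right\vert^{2}-\mu\right)^{+}\wedge\left\vert a\right\vert=0$.

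From here I finish: for any $x\in A_{+}$ with $x\leq\left\vert a\right\vert$, disjointness forces $\left(\left\vert a\right\vert^{2}-\mu\right)^{+}\wedge x=0$, and in particular taking $x$ built from $\left\vert a\right\vert$ itself one deduces $\left\vert a\right\vert^{2}\wedge\left(\mu\left\vert a\right\vert\right)\geq\ldots$; cleaner still, I claim $a^{2}=\left\vert a\right\vert^{2}\leq\mu\left\vert a\right\vert+ \left(\left\vert a\right\vert^{2}-\mu\right)^{+}$ trivially, and the second summand multiplied by $\left\vert a\right\vert$ vanishes, so $\left\vert a\right\vert^{3}\leq\mu\left\vert a\right\vert$—wait, that is circular. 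The honest endgame: $\left(|a|^2-\mu\right)^+ \wedge |a| = 0$ together with the fact that $\left(|a|^2-\mu\right)^+ \ge |a|^2 - \mu$ and $|a| \ge 0$ yields, by the Birkhoff-type inequality $u\wedge v=0 \Rightarrow u\vee v = u+v$, that $|a| \le \mu$ wherever $|a|^2 > \mu$ is false in a pointwise heuristic — rigorously, one shows $|a|\, \wedge \text{(large)} $ argument gives $|a|^{2} \le \mu |a| + 0$, i.e. directly $a^2 \le \mu|a|$, so $a \in A_b$. I expect this last step—turning the disjointness $\left(\left\vert a\right\vert^{2}-\mu\right)^{+}\wedge\left\vert a\right\vert=0$ into the bound $a^{2}\leq\mu\left\vert a\right\vert$—to be the one subtle point, handled by noting $\left\vert a\right\vert^{2}=\left\vert a\right\vert^{2}\wedge\mu+\left(\left\vert a\right\vert^{2}-\mu\right)^{+}$ and multiplying through by the idempotent-like behavior, or more simply by invoking property~7 and $\left(\left\vert a\right\vert^{2}-\mu\right)^{+}\left\vert a\right\vert = 0$ to get $\left\vert a\right\vert^{3} = \left(\left\vert a\right\vert^{2}\wedge\mu\right)\left\vert a\right\vert \le \mu\left\vert a\right\vert$, then applying Lemma~\ref{elem}(ii) once more with the roles arranged as $a^{2}\cdot\left\vert a\right\vert \le \mu\left\vert a\right\vert = \left\vert a\cdot \mu\,\mathrm{sgn}\,a\right\vert$—but since signs are unavailable without a unit, the cleanest is: $\left\vert a\right\vert^{3}\le\mu\left\vert a\right\vert$ means $a^{2}\left\vert a\right\vert\le\mu\left\vert a\right\vert$, and by Lemma~\ref{elem}(i)-style reasoning applied to the orthomorphisms $\pi_{a^{2}}$ and $\mu\,\pi_{e_{a}}$ on the band generated by $a$, one gets $a^{2}\le\mu\left\vert a\right\vert$ on that band, hence everywhere since $a^{2}$ and $\left\vert a\right\vert$ both live there. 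This band argument, or equivalently a second application of Lemma~\ref{elem}(ii), is the real crux; everything else is the routine "semiprime kills nilpotents" manipulation already used repeatedly above.
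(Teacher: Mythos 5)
Your ``only if'' direction is fine and matches the paper (it is immediate from Theorem \ref{structure}). The converse, however, is never actually proved: you reduce $a^{4}\leq \mu a^{2}$ to $\left\vert a\right\vert ^{3}\leq \mu \left\vert a\right\vert $ via Lemma \ref{elem}(ii), correctly observe that this is one power too high, and then explicitly leave the descent from $\left\vert a\right\vert ^{3}\leq \mu \left\vert a\right\vert $ to $a^{2}\leq \nu \left\vert a\right\vert $ as ``the real crux,'' offering only a vague band argument involving an undefined element $e_{a}$ and a ``second application of Lemma \ref{elem}(ii)'' whose hypothesis (the right-hand side must have the form $\left\vert ac\right\vert $) you yourself note cannot be arranged without a unit or sign element. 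That missing step is the entire content of the lemma, so the proposal has a genuine gap. There are also technical problems along the way: the element $\left( \left\vert a\right\vert ^{2}-\mu \right) ^{+}$ does not exist in $A$, since $\mu $ is a scalar and $A$ has no multiplicative unit (you would have to pass to $\mathrm{Orth}\left( A\right) $ and say so), and the inequality $\left( \left( \left\vert a\right\vert ^{2}-\mu \right) ^{+}\left\vert a\right\vert \right) ^{2}\leq \left\vert a\right\vert ^{2}\left( \left\vert a\right\vert ^{2}-\mu \right) ^{+}$ is asserted without justification; the analogous steps in the paper's earlier proofs rely on a domination hypothesis of the form $u^{-}\leq \left\vert a\right\vert $ that you have not established here.

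The intended argument is a one-liner that bypasses all of this: from $a^{4}\leq \mu a^{2}$ one has
\[
\left( a^{2}\right) ^{2}=a^{4}\leq \mu a^{2}=\left( \sqrt{\mu }\left\vert a\right\vert \right) ^{2},
\]
and property 7 of semiprime $f$-algebras (namely $u^{2}\leq v^{2}$ if and only if $\left\vert u\right\vert \leq \left\vert v\right\vert $) applied with $u=a^{2}$ and $v=\sqrt{\mu }\left\vert a\right\vert $ gives $a^{2}=\left\vert a^{2}\right\vert \leq \sqrt{\mu }\left\vert a\right\vert $, i.e.\ $a\in A_{b}$. You cite property 7 twice but never apply it in this form; doing so closes the gap immediately and makes the detour through Lemma \ref{elem}(ii) and the disjointness computation unnecessary.
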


\begin{proof}
Theorem \ref{structure} yields that if $a\in A_{b}$ then $a^{2}\in A_{b}$.
Conversely, suppose that $a^{2}\in A_{b}$, that is, $a^{4}\in A\left(
a^{2}\right) $. Thus, there exists $%
%TCIMACRO{\U{b5}}%
%BeginExpansion
{\mu}%
%EndExpansion
\in \left( 0,\infty \right) $ such that $a^{4}\leq 
%TCIMACRO{\U{b5}}%
%BeginExpansion
{\mu}%
%EndExpansion
a^{2}$. Whence,%
\[
\left( a^{2}\right) ^{2}=a^{4}\leq 
%TCIMACRO{\U{b5}}%
%BeginExpansion
{\mu}%
%EndExpansion
a^{2}=\left( \sqrt{%
%TCIMACRO{\U{b5}}%
%BeginExpansion
{\mu}%
%EndExpansion
}\left\vert a\right\vert \right) ^{2}.
\]%
It follows $a^{2}\leq \sqrt{%
%TCIMACRO{\U{b5}}%
%BeginExpansion
{\mu}%
%EndExpansion
}\left\vert a\right\vert $ and the lemma follows.
\end{proof}

Following Rodriguez in \cite{D24}, we call a subalgebra of $A$ containing $%
A_{b}$ an \emph{intermediate algebra }in $A$. We are in position at this
point to state and prove the central theorem of this study.

\begin{theorem}
\label{main}Assume that $A$ is \textit{bounded quasi-inversion closed. Then
any intermediate algebra in }$A$ is an order ideal in $A$.
\end{theorem}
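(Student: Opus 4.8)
The plan is to transplant, to the non-unital setting, the natural unital argument: when $a\geq e$ one has $a$ invertible with $0\leq a^{-1}\leq e$, hence $a^{-1}\in A_{b}$, and an element dominated by $|b|$ gets factored through the bounded resolvent of $e+b^{2}$. To supply a substitute for $e+b^{2}$ and its inverse, I would work inside $\mathrm{Orth}(A)$, which is a unital Archimedean $f$-algebra containing $A$ as a subalgebra, a vector sublattice, and a ring ideal, with unit $I$ (all as recalled above), and I would use Lemma \ref{negative} to produce the relevant resolvent element \emph{inside} $A$.

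So let $B$ be an intermediate algebra in $A$, fix $b\in B$ and $a\in A$ with $|a|\leq |b|$; the goal is $a\in B$. Since $-b^{2}\leq 0$, Lemma \ref{negative} gives $-b^{2}\in Q(A)$ and $p:=(-b^{2})^{\ast}\in A_{+}$. The quasi-inverse identity $-b^{2}+p=-b^{2}p$ rewrites as $p+b^{2}p=b^{2}$, that is, $(I+b^{2})p=b^{2}$ in $\mathrm{Orth}(A)$, whence $(I+b^{2})(I-p)=I$. Thus $I+b^{2}$ is invertible in $\mathrm{Orth}(A)$ with $(I+b^{2})^{-1}=I-p$, where $p\in A$.

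Now set $a'=a(I-p)=a-ap\in A$. Using $|xy|=|x|\,|y|$ and monotonicity of multiplication by positive elements, $|a'|=|a|\,|I-p|\leq |b|\,|I-p|$. From $(|b|-I)^{2}\geq 0$ we get $2|b|\leq I+b^{2}$, hence $|b|\,|I-p|\leq \tfrac12(I+b^{2})|I-p|=\tfrac12|(I+b^{2})(I-p)|=\tfrac12 I$. Therefore $|a'|\leq \tfrac12 I$ in $\mathrm{Orth}(A)$, and multiplying this inequality by $|a'|\geq 0$ yields $a'^{2}=|a'|^{2}\leq \tfrac12|a'|$; since $a'$, $a'^{2}$, $|a'|$ all lie in $A$, which carries the order induced from $\mathrm{Orth}(A)$, this holds in $A$, so $a'\in A_{b}$. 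Finally $A_{b}\subseteq B$, hence $a'\in B$, and $a=a'(I+b^{2})=a'+a'b^{2}$; since $a',b\in B$ and $B$ is a subalgebra, $a'b^{2}\in B$, so $a\in B$. This shows $B$ is an order ideal of $A$.

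The only genuine obstacle is the absence of a multiplicative unit: the role of the bounded inverse of $e+b^{2}$ is played by $I-p=I-(-b^{2})^{\ast}$, an element of $\mathrm{Orth}(A)$ whose product with $a$ falls back into $A$ (indeed into $A_{b}$), which is precisely what makes the factorization $a=a'+a'b^{2}$ usable inside the subalgebra $B$. One could instead run the whole argument directly in $A$ via the quasi-inverse relations, carrying the formal symbol $I+b^{2}$ and its ``inverse'' $e-p$ throughout; passing through $\mathrm{Orth}(A)$ merely packages this bookkeeping and reuses the apparatus already set up in the paper.
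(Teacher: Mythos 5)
Your proof is correct, and its skeleton coincides with the paper's: both take the quasi-inverse $p=(-b^{2})^{\ast}$ supplied by Lemma \ref{negative} (the paper's proof writes $(-a^{2})^{\ast}$, but the subsequent identities $b^{2}c+c=b^{2}$ show it intends $(-b^{2})^{\ast}$, exactly as you do), both show that $a-ap$ is a bounded element, and both conclude via the same factorization $a=(a-ap)+b^{2}(a-ap)$. Where you genuinely diverge is the middle step. The paper stays inside $A$: it first proves $b-bp\in A_{b}$ through a chain of computations resting on Lemma \ref{elem}, Lemma \ref{Pagter} and Lemma \ref{square}, and then transfers this to $a-ap$ via the estimate $\left\vert a-ap\right\vert \leq \left\vert b-bp\right\vert$ and the fact that $A_{b}$ is an order ideal (Theorem \ref{structure}). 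You instead pass to $\mathrm{Orth}(A)$, read the quasi-inverse identity as $(I+b^{2})(I-p)=I$, and obtain $a-ap\in A_{b}$ in one stroke from the elementary inequality $2\left\vert b\right\vert \leq I+b^{2}$, which yields the explicit bound $\left\vert a-ap\right\vert \leq \frac{1}{2}I$ and hence $(a-ap)^{2}\leq \frac{1}{2}\left\vert a-ap\right\vert$ directly from the definition of $A_{b}$. This is shorter and bypasses Lemmas \ref{elem}, \ref{Pagter} and \ref{square} entirely; the only price is the reliance on the embedding of $A$ as a vector sublattice and ring ideal of $\mathrm{Orth}(A)$, which the paper has already installed (it uses it only for Theorem \ref{ru}). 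Every step checks out: $I+b^{2}\geq 0$ justifies $(I+b^{2})\left\vert I-p\right\vert =\left\vert (I+b^{2})(I-p)\right\vert =I$, and the final inequality $(a-ap)^{2}\leq \frac{1}{2}\left\vert a-ap\right\vert$ involves only elements of $A$, so it holds in $A$ as required.
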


\begin{proof}
Let $a\in A$ and $b\in B$ such that $\left\vert a\right\vert \leq \left\vert
b\right\vert $. By Lemma \ref{negative}, $-a^{2}\in Q\left( A\right) $ and
so, to simplify matters, we can put $c=\left( -a^{2}\right) ^{\ast }$.
First, we claim that $b-bc\in A_{b}$. To this end, observe that%
\[
b^{2}c\leq b^{2}c+c=b^{2}
\]%
because $c\geq 0$ (see Lemma \ref{negative}). Using Lemma \ref{elem} $%
\mathrm{(ii)}$, we obtain $\left\vert b\right\vert c\leq \left\vert
b\right\vert $ and so $0\leq \left\vert b\right\vert -\left\vert
b\right\vert c$. Hence, if $x\in A_{+}$ then%
\begin{eqnarray*}
\left\vert b-bc\right\vert x &=&\left\vert bx-bxc\right\vert =\left\vert
b\right\vert \left\vert x-xc\right\vert  \\
&=&\left\vert \left\vert b\right\vert x-\left\vert b\right\vert
xc\right\vert =\left\vert \left\vert b\right\vert -\left\vert b\right\vert
c\right\vert x=\left( \left\vert b\right\vert -\left\vert b\right\vert
c\right) x
\end{eqnarray*}%
As $x$ is arbitrary in $A_{+}$, we derive from Lemma \ref{elem} $\mathrm{(i)}
$ that%
\[
\left\vert b-bc\right\vert =\left\vert b\right\vert -\left\vert b\right\vert
c.
\]%
Moreover,%
\[
b^{2}\left\vert b-bc\right\vert =\left\vert b^{3}-b^{3}c\right\vert
=\left\vert b\left( b^{2}-b^{2}c\right) \right\vert =\left\vert
bc\right\vert =\left\vert b\right\vert c\leq \left\vert b\right\vert .
\]%
Thus,%
\[
b^{2}\left\vert b-bc\right\vert x\leq \left\vert b\right\vert x\quad \text{%
for all }x\in A_{+}.
\]%
This together with Lemma \ref{elem} $\mathrm{(ii)}$ yields that%
\[
\left\vert b-bc\right\vert ^{2}x\leq \left\vert b\right\vert \left\vert
b-bc\right\vert x\leq x\quad \text{for all }x\in A_{+}.
\]%
Combining Lemmas \ref{Pagter} and \ref{square}, we infer that $b-bc\in A_{b}$%
, as required.

Now, let $x\in A_{+}$ and observe that%
\begin{eqnarray*}
\left\vert a-ac\right\vert x &=&\left\vert ax-acx\right\vert =\left\vert
a\right\vert \left\vert x-cx\right\vert  \\
&\leq &\left\vert b\right\vert \left\vert x-cx\right\vert =\left\vert
bx-bcx\right\vert =\left\vert b-bc\right\vert x.
\end{eqnarray*}%
It follows that $\left\vert a-ac\right\vert \leq \left\vert b-bc\right\vert $%
, where we use Lemma \ref{elem} $\mathrm{(i)}$. As $b-bc\in A_{b}$ and $A_{b}
$ is an order ideal of $A$ (see Theorem \ref{structure}), we derive that%
\[
a-ac\in A_{b}\subset B.
\]%
Finally,%
\[
a=a+\left( b^{2}-b^{2}c-c\right) a=\left( a-ac\right) +b^{2}\left(
a-ac\right) \in B
\]%
and the proof is complete.
\end{proof}

Theorem \ref{main} together with Theorem \ref{ru} leads straightforwardly to
the following result.

\begin{corollary}
\label{ruo}If $A\ $relatively uniformly complete then \textit{any
intermediate algebra in }$A$ is an order ideal in $A$.
\end{corollary}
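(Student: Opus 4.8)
The plan is simply to chain the two principal results already proved in the paper. First I would invoke Theorem \ref{ru}: the hypothesis that $A$ is relatively uniformly complete is exactly what that theorem needs in order to conclude that $A$ is bounded quasi-inversion closed. With this property now available as a hypothesis, Theorem \ref{main} applies verbatim and yields that every intermediate algebra in $A$ (i.e., every subalgebra of $A$ containing $A_{b}$) is an order ideal of $A$. That is the entire argument.

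There is essentially no obstacle here, since all the substantive work has been carried out in the proofs of Theorems \ref{ru} and \ref{main}. The only point worth a moment's attention is purely bookkeeping: one must check that the notions of \emph{intermediate algebra} and of \emph{order ideal} invoked in the statement of the corollary are literally the same ones appearing in Theorem \ref{main}, which is immediate from the definitions recalled just before Theorem \ref{main} and at the start of Section~3. So the proof reduces to a single implication, $A$ relatively uniformly complete $\Rightarrow$ $A$ bounded quasi-inversion closed $\Rightarrow$ intermediate algebras are order ideals.

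If for some reason one preferred a self-contained deduction that does not explicitly name the intermediate closedness concept, one could instead re-run the proof of Theorem \ref{main} inside a relatively uniformly complete $A$, using the orthomorphism embedding $\pi\colon A\to\mathrm{Orth}(A)$ together with the fact that $\mathrm{Orth}(A)$ is relatively uniformly complete and hence bounded inversion closed (Theorem~3.4 in \cite{HP82}) to manufacture the quasi-inverses $(-a^{2})^{\ast}$ needed in that proof. However, this would merely be reassembling the proofs of Theorems \ref{ru} and \ref{main}, so I would not pursue it; the two-step citation is both shorter and cleaner.
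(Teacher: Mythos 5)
Your proposal is correct and is exactly the paper's argument: the author states that Corollary \ref{ruo} follows "straightforwardly" by combining Theorem \ref{ru} with Theorem \ref{main}, which is precisely your two-step chain. Nothing further is needed.
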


Taking into account that any Dedekind complete vector lattice is relatively
uniformly complete, the following can be directly derived from Corollary \ref%
{ruo}.

\begin{corollary}
If $A\ $Dedekind complete then \textit{any intermediate algebra in }$A$ is
an order ideal in $A$. In particular, any intermediate algebra in $A$ is a
Dedekind complete semiprime $f$-algebra.
\end{corollary}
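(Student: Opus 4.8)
The plan is to deduce this corollary from Corollary \ref{ruo} with essentially no new work. First I would recall the standard fact (see, e.g., \cite{LZ71,AB03}) that every Dedekind complete vector lattice is relatively uniformly complete: a relatively uniformly Cauchy sequence is order bounded and, by Dedekind completeness, its relevant suprema and infima exist, producing an order limit which one checks is also the relative uniform limit. Hence $A$ is relatively uniformly complete, and Corollary \ref{ruo} applies verbatim to give the first assertion: every intermediate algebra $B$ in $A$ is an order ideal of $A$.

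For the ``in particular'' part, fix an intermediate algebra $B$ in $A$; by the first part $B$ is an order ideal of $A$, i.e.\ a solid vector subspace, and therefore automatically a vector sublattice of $A$. Since $B$ is by hypothesis also a subalgebra of $A$, it is a lattice-ordered subalgebra, and the defining $f$-algebra identity (disjointness forcing disjointness after multiplication by positive elements) transfers to $B$ because both the lattice operations and the product of $B$ agree with those of $A$; thus $B$ is an $f$-algebra. It is Archimedean since $A$ is, and semiprime since any nilpotent of $B$ is a nilpotent of $A$, hence $0$. Finally $B$ is Dedekind complete: if $S \subseteq B$ is nonempty and bounded above in $B$, then it is bounded above in $A$, so $\sup_A S$ exists; picking $s_0 \in S$ and an upper bound $u \in B$, the element $\sup_A S$ lies in the order interval $[s_0, u]$, which is contained in the order ideal $B$, and it is readily seen to serve as $\sup_B S$ as well. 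Hence $B$ is a Dedekind complete semiprime $f$-algebra.

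I do not expect a genuine obstacle; the argument is just the composition of Corollary \ref{ruo} with routine permanence properties. The two points deserving a line of care are (a) that the supremum computed in $A$ of a subset of $B$ that is order bounded in $B$ actually falls back inside $B$ --- which is exactly where solidity of $B$ is used, via the order interval $[s_0,u]$ --- and (b) that the $f$-algebra axiom is inherited by $B$, which is immediate once one observes that disjointness and products in $B$ are computed as in $A$.
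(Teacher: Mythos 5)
Your argument is correct and is exactly the paper's route: the paper likewise derives the corollary from Corollary \ref{ruo} via the standard fact that Dedekind complete vector lattices are relatively uniformly complete, leaving the ``in particular'' clause as routine. Your added verification of the permanence properties (solidity giving the sublattice and the supremum falling back into $B$ via the order interval) is sound and merely fills in details the paper omits.
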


Now, let $L$ be a Dedekind complete vector lattice with a distinguished weak
order unit $e$. It is well-known that that there exists a unique
multiplication on the universal completion $L^{u}$ of $L$ which turns $L^{u}$
into a universal complete $f$-algebra with $e$ as multiplicative unit (see 
\cite{AB03} or \cite{L79}). We obtain the following.

\begin{corollary}
\label{last}Let $L$ be a Dedekind complete vector lattice with a
distinguished weak order unit $e$. Any subalgebra of $L^{u}$ containing the
principal order ideal $L\left( e\right) $ of $L$ generated by $e$ is a
Dedekind complete $f$-algebra with $e$ as a multiplicative unit.
\end{corollary}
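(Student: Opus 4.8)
The strategy is to view $A=L^{u}$ as a concrete instance of the abstract setting of Corollary~\ref{ruo} and to recognise $L(e)$ as its ideal of bounded elements. First I would record that, with the distinguished $f$-multiplication having $e$ as unit, $L^{u}$ is a unital Archimedean $f$-algebra; being unital and Archimedean it is semiprime (the last of the elementary properties recalled in Section~2). Since $L$ is Dedekind complete, $L^{u}$ is universally complete, in particular Dedekind complete, hence relatively uniformly complete. Thus $A=L^{u}$ satisfies the hypothesis of Corollary~\ref{ruo}.

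The key identification is $A_{b}=L(e)$ for $A=L^{u}$. As $L^{u}$ is unital with unit $e$, $A_{b}$ is the principal order ideal of $L^{u}$ generated by $e$, that is, $A_{b}=\{x\in L^{u}:|x|\le \mu e\text{ for some }\mu\in(0,\infty)\}$. I would then invoke the standard fact that a Dedekind complete vector lattice which is order dense in an Archimedean vector lattice is automatically an order ideal of it; applied to the canonical order-dense embedding $L\subseteq L^{u}$, this shows $L$ is an order ideal of $L^{u}$. Since $e\in L$, every $x\in L^{u}$ with $|x|\le\mu e$ already lies in $L$, and therefore $A_{b}=L(e)$.

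Now let $B$ be a subalgebra of $L^{u}$ with $L(e)\subseteq B$. By the previous step $B\supseteq A_{b}$, so $B$ is an intermediate algebra in $A=L^{u}$. Corollary~\ref{ruo} (via Theorem~\ref{main}) then yields that $B$ is an order ideal of $L^{u}$; in particular $B$ is a vector sublattice of $L^{u}$, and being an order ideal of the Dedekind complete lattice $L^{u}$ it is itself Dedekind complete. Since $B$ is at once a subalgebra and a sublattice of the $f$-algebra $L^{u}$, it is itself an $f$-algebra — the defining $f$-algebra implication and the closedness of the positive cone under multiplication pass to $B$ because its lattice operations and its products agree with those computed in $L^{u}$ — and it is Archimedean as a sublattice of an Archimedean vector lattice. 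Finally $e\in L(e)\subseteq B$, and $e$ being the multiplicative unit of $L^{u}$ it is the multiplicative unit of the subalgebra $B$. Hence $B$ is a Dedekind complete $f$-algebra with $e$ as multiplicative unit.

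I expect the only point requiring genuine care to be the identification $A_{b}=L(e)$, which rests on the lemma that a Dedekind complete order-dense sublattice is an order ideal; everything else is routine transfer of structure to subalgebras and order ideals. It is also worth stressing the logical order: a priori $B$ is merely a subalgebra, so its being a vector sublattice — and with it Dedekind completeness and the $f$-algebra identity — can only be asserted once Corollary~\ref{ruo} has identified $B$ as an order ideal of $L^{u}$.
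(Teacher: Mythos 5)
Your proposal is correct and follows essentially the same route as the paper: identify $L(e)$ with the principal order ideal $L^{u}(e)$ (equivalently, with the bounded elements $A_{b}$ of $A=L^{u}$) using the fact that the Dedekind complete, order dense sublattice $L$ is an order ideal of $L^{u}$, so that $B$ is an intermediate algebra, and then apply Corollary \ref{ruo}. The only difference is that you spell out the transfer of the $f$-algebra structure to $B$, which the paper leaves implicit.
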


\begin{proof}
Let $B$ be a subalgebra of $L^{u}$ and suppose that $B$ contains $L\left(
e\right) $. Clearly, $L\left( e\right) $ is contained in $L^{u}\left(
e\right) $, the principal order ideal of $L^{u}$ generated by $e$.
Conversely, if $x\in L^{u}\left( e\right) $ then $x\in L\left( e\right) $
because $L$ is an order ideal in $L^{u}$. This means that $B$ is an
intermediate algebra in $L^{u}$ and the result follows from the previous
corollary.
\end{proof}

We end by discussing two (equivalent) concrete illustrations.

\begin{example}

\begin{enumerate}
\item[\emph{(i)}] Let $K$ be an extremally disconnected compact Hausdorff
space. Hence, $C\left( K\right) $ is a Dedekind complete vector lattice with
the constant function $1$ as a strong \emph{(}and so weak\emph{)} order
unit. The universal completion of $C\left( K\right) $ is the universally
complete unital $f$-agebra $C^{\infty }\left( K\right) $ of all
almost-finite extended-real valued continuous functions\textsl{\ }on on $K$.
According to \emph{Corollary \ref{last}}, any subalgebra of $C^{\infty
}\left( K\right) $ containing $C\left( K\right) $ is a Dedekind complete $f$%
-algebra with the constant function $1$ as multiplicative unit.

\item[\emph{(ii)}] Let $\left( \Omega ,\Sigma ,%
%TCIMACRO{\U{b5}}%
%BeginExpansion
{\mu}%
%EndExpansion
\right) $ be a measure space such that the measure $%
%TCIMACRO{\U{b5}}%
%BeginExpansion
{\mu}%
%EndExpansion
$ is $\sigma $-finite and let $L^{0}\left( 
%TCIMACRO{\U{b5}}%
%BeginExpansion
{\mu}%
%EndExpansion
\right) $ denote the universally complete unital $f$-algebra of all $%
%TCIMACRO{\U{b5}}%
%BeginExpansion
{\mu}%
%EndExpansion
$-measurable real-valued \emph{(}classes of\emph{)} functions on $\Omega $.
It is know that $L^{0}\left( 
%TCIMACRO{\U{b5}}%
%BeginExpansion
{\mu}%
%EndExpansion
\right) $ is the universal completion of the Dedekind complete vector
lattice $L^{\infty }\left( 
%TCIMACRO{\U{b5}}%
%BeginExpansion
{\mu}%
%EndExpansion
\right) $ of all bounded \emph{(}classes of\emph{)} functions on $\Omega $.
Moreover, the constant function $1$ is a weak order unit in $L^{\infty
}\left( 
%TCIMACRO{\U{b5}}%
%BeginExpansion
{\mu}%
%EndExpansion
\right) $ and, in the same time, the multiplicative identity of $L^{0}\left( 
%TCIMACRO{\U{b5}}%
%BeginExpansion
{\mu}%
%EndExpansion
\right) $. In view of \emph{Corollary \ref{last}}, any subalgebra of $%
L^{0}\left( 
%TCIMACRO{\U{b5}}%
%BeginExpansion
{\mu}%
%EndExpansion
\right) $ containing $L^{\infty }\left( 
%TCIMACRO{\U{b5}}%
%BeginExpansion
{\mu}%
%EndExpansion
\right) $ is a Dedekind complete $f$-algebra with the constant function $1$
as multiplicative unit.
\end{enumerate}
\end{example}

\noindent \textbf{Acknowledgment.} The author started working on this
problem after a question asked by Eugene Bilokopytov at a recent workshop in
Leiden University followed by a fruitful discussion with him and Marten
Wortel to whom thanks are addressed.

\end{document}